\newtheorem{theorem}{Theorem}[section]
\newtheorem{lemma}[theorem]{Lemma}
\theoremstyle{definition} 
\newtheorem{definition}[theorem]{Definition}
\newcommand{\qu}{/\kern-.7ex/}
\newcommand{\lqu}{\backslash \kern-.7ex \backslash}%left
\newcommand{\on}{\operatorname}
\title[Wall-crossing in K-theoretic LG theory]{Wall-Crossing in Genus Zero K-theoretic Landau-Ginzburg Theory}
\author{Hsian-Hua Tseng}
\address{Department of Mathematics\\ Ohio State University\\ 100 Math Tower, 231 West 18th Ave.\\Columbus\\ OH 43210\\ USA}
\email{hhtseng@math.ohio-state.edu}
\author{Fenglong You}
\address{Department of Mathematics\\ Ohio State University\\ 100 Math Tower, 231 West 18th Ave.\\Columbus\\ OH 43210\\ USA}
\email{you.111@osu.edu}
\keywords{}
\begin{document}
\date{\today}

\begin{abstract} 
For a Fermat quasi-homogeneous polynomial $W$, we study a family of K-theoretic quantum invariants parametrized by a positive rational number $\epsilon$. We prove a wall-crossing formula by showing the generating functions lie on the Lagrangian cone of the permutation-equivariant K-theoretic FJRW theory of $W$.
\end{abstract}

\maketitle 

\tableofcontents

\section{Introduction}

Landau-Ginzburg/Calabi-Yau correspondence (LG/CY) arises from a variation of the GIT quotient in Witten's gauged linear sigma model (GLSM) \cite{Witten}. LG/CY correspondence describes a relationship between sigma models based on Calabi-Yau hypersurfaces in weighted projective spaces and the Landau-Ginzburg model of the defining equation of the Calabi-Yau. The mathematical A-model on the Calabi-Yau side is given by Gromov-Witten (GW) theory. On the other hand, the mathematical A-model on the Landau-Ginzburg side is called Fan-Jarvis-Ruan-Witten (FJRW) theory formulated by Fan-Jarvis-Ruan \cite{FJR13}.

LG/CY correspondence was proved by Chiodo-Iritani-Ruan in genus zero for Calabi-Yau Fermat polynomials \cite{CIR}. They proved a mirror theorem relating the FJRW theory of a Fermat polynomial to a hypergeometric series. The analogous mirror theorem for GW theory was proved by Givental \cite{Givental96} and Lian-Liu-Yau \cite{LLY}. The LG/CY correspondence follows from analytic continuation on the global K\"ahler moduli space.

Ross-Ruan \cite{RR} obtained a new geometric interpretation of the Landau-Ginzburg mirror theorem of Chiodo-Iritani-Ruan \cite{CIR} by proving a wall-crossing formula that relates the generating functions of GLSM introduced by Fan-Jarvis-Ruan \cite{FJR15}, where a mathematically rigorous definition of GLSM is provided. For hypersurface,  the GLSM is a one-dimensional family of cohomological field theories (CohFTs) parametrized by the nonzero rational numbers. The family of CohFTs which arise over the positive rational numbers corresponds to the geometric phase. Wall-crossing formulas in the geometric phase have been studied by Ciocan-Fontanine-Kim \cite{CFK13}, \cite{CFK14a} and \cite{CFK14b}. The family of CohFTs which arise over the negative rational numbers corresponds to the Landau-Ginzburg phase. Ross-Ruan \cite{RR} proved wall-crossing formulas analogous to Ciocan-Fontanine-Kim for a Fermat polynomial. 

The goal of this paper is to prove an analog of Ross-Ruan's result in K-theory. K-theoretic Gromov-Witten invariants was defined by A. Givental and Y.-P. Lee \cite{GL}, \cite{Lee} as holomorphic Euler characteristics of vector bundles on the moduli space of stable maps. A new enriched version of the theory called permutation-equivariant K-theoretic Gromov-Witten theory has been introduced by Givental \cite{Givental15}. The permutation-equivariant theory takes into account the $S_n$ action on the moduli spaces permuting the marked points. It fits better in the framework of mirror symmetry. Givental \cite{Givental15} proved that certain $q$-hypergeometric series associated to toric manifolds lie on the Lagrangian cone of the permuation-equivariant K-theoretic Gromov-Witten theory. A ``quantum Lefschetz'' type theorem for an arbitrary smooth projective variety and a K-theoretic mirror theorem for toric fibrations were proved by Tonita \cite{Tonita}. A permutation-equivariant K-theoretic analogy of Ciocan-Fontanine-Kim \cite{CFK14a} was proved by Tseng-You \cite{TY}.

The definition of K-theoretic FJRW theory and the K-theoretic wall-crossing formula in genus zero between FJRW and GW of hypersurfaces has been worked out by J\'er\'emy Gu\'er\'e \cite{Guere}. More precisely, Gu\'er\'e proved his result using mirror symmetry as in the original LG/CY correspondence of Chiodo-Iritani-Ruan \cite{CIR}. Our result, on the other hand, follows the approach of Ross-Ruan \cite{RR}.

we now give a more detailed overview of our result.

For each $\epsilon \in \mathbb Q$, we consider the CohFT arises from the negative rational number $-\epsilon$ in the Landau-Ginzburg phase. The moduli space $\mathcal R^d_{\vec k, \epsilon \vec l}$ parametrizes $\epsilon$-stable pairs $(C,L)$, where $C$ is a Hassett-stable rational orbifold curve with $m$ orbifold marked points $x_1,\ldots, x_m$ and $n$ smooth light points $y_1,\ldots,y_n$, and $L$ is an orbifold line bundle which satisfies 
\[
L^{\otimes d}\cong \omega_{log}(-\sum l_i y_i).
\] 
The vector $\vec k=(k_1,\ldots, k_m)$ records the multiplicities of the line bundles at the orbifold marked points.

For a Fermat polynomial $W$, the K-theoretic invariant is defined by
\[
\langle \phi_{k_1}\mathbb L^{j_1},\ldots, \phi_{k_m}\mathbb L^{j_m}| \phi_{l_1},\ldots,\phi_{l_n}\rangle_{m,n}^{W,\epsilon}=
d\cdot\chi(\mathcal R^d_{\vec k,\epsilon \vec l},\mathcal O^{vir}_{\vec k, \epsilon \vec l}\otimes (\otimes_{i}^m \mathbb L_i^{\otimes j_i}) ),
\] 
where $\mathcal O^{vir}_{\vec k, \epsilon \vec l}$ is called the virtual structure sheaf, $\phi_i$ are elements of the so called narrow state space $K_W^\prime$, $j_i$ are nonnegative integers and $\mathbb L_i$ are line bundles over $\mathcal R^d_{\vec k,\epsilon \vec l}$ corresponding to the pullback of the $i$-th  tautological line bundles over the Hassett moduli space \cite{Hassett} via forgetful maps. We write 
\[
\langle \phi_{k_1}\mathbb L^{j_1},\ldots, \phi_{k_m}\mathbb L^{j_m}| \phi_{l_1},\ldots,\phi_{l_n}\rangle_{m,n}^{W,\epsilon, S_m\times S_n}
\]
for the permutation-equivariant invariants.

Genus zero wall-crossing formulas are naturally stated via generating functions of permutation-equivariant K-theoretic invariants. Let ${\bf t}(q)$ be a Laurent polynomial in $q$ with coefficients in $K^\prime_W\otimes \Lambda$, where $\Lambda$ is a $\lambda$-algebra, and ${\bf u}:=\sum u^k \phi_k\in K^\prime_W$. For a positive rational number $\epsilon$, the big $\mathcal J^\epsilon$-function is defined as
\begin{align*}
\mathcal J^\epsilon(t,u,q):=& (1-q)\phi_0\sum\limits_{\substack{a_i\geq0, i\in\on{nar}\\ \sum a_i\leq \lceil \frac 1 \epsilon \rceil}}\prod_i\left(\frac{u^i\phi_i}{1-q}\right)^{a_i}\prod_{j=1}^N\prod\limits_{\substack{0\leq b <q_j+ \sum_i a_i\langle i q_j\rangle\\ \langle b\rangle=\langle q_j+\sum_i a_i i q_j\rangle}}(1-q^{b})\\
& +{\bf t}(1/q)+\sum_k \sum_{m,n}\phi^k\langle \frac{\phi_k}{1-q\mathbb L_1},{\bf t}(\mathbb L)^m|{\bf u}^n\rangle_{1+m,n}^{\epsilon,S_m\times S_n},
\end{align*}
Theorem \ref{main-theorem} shows that the $\mathcal J^\epsilon$-function lies on the Lagrangian cone of the permutation-equivariant K-theoretic FJRW theory of $W$.

\subsection{Acknowledgments}
We want to thank J\'er\'emy Gu\'er\'e, Dustin Ross and Yongbin Ruan for discussions. H.-H. T. is supported in part by NSF grant DMS-1506551 and a Simons Foundation Collaboration Grant. F. Y. is supported by the Presidential Fellowship  at the Ohio State University.

\section{K-theoretic Weighted FJRW Theory}
\subsection{Moduli Spaces}\label{sec-moduli-space}
\begin{definition}[\cite{RR} Definition 1.1]\label{d-epsilon-stable-curve}
For a positive rational number $\epsilon$, a $(d,\epsilon)$-stable rational curve is a rational connected orbifold curve $C$ with at worst nodal singularity, together with $m$ distinct orbifold marked points $x_1,\ldots,x_m$ and $n$ (not necessarily distinct) smooth light marked points $y_1,\ldots,y_n$ satisfies the following

\begin{itemize}
\item all nodes and orbifold marked points $x_i$ have cyclic isotropy $\mu_d$ and the orbifold structure is trivial away from nodes and orbifold markings;
\item $\on{mult}_z(\epsilon \sum[y_i])\leq 1$ at every points $z$ in $C$;
\item $\omega_{log}(\epsilon[y_i]):=\omega_C(\sum[x_i]+\epsilon[y_i])$ is ample.
\end{itemize}
We write 
\[
\overline{M}^d_{0,m+\epsilon n}
\] 
for the moduli space of $(d,\epsilon)$-stable curves.
\end{definition}

\begin{definition}[\cite{RR} Definition 1.2]
For 
\[
\vec{l}:=(l_1,\ldots,l_n)
\]
with $0\leq l_i\leq d-1$, an $\vec l$-twisted $d$-spin structure on a $(d,\epsilon)$-stable curve consists of an orbifold line bundle $L$ and an isomorphism
\[
L^{\otimes d}\overset{\kappa}{\longrightarrow} \omega_{log}(-\sum l_i[y_i])
\]
We write 
\[
\mathcal R^d_{m,\epsilon \vec l}
\]
for the moduli space of $(d,\epsilon)$-stable curves with $\vec l$-twisted $d$-spin structure.
\end{definition}

The restriction of $L$ to an orbifold point $x$ is a character of $\mu_d$, that is, multiplication by $e^{2\pi ik/d}$, for some $0\leq k <d$. The multiplicity of $L$ at $x$ is defined by 
\[
\on{mult}_x L:=k. 
\]

\begin{definition}
For 
\[
\vec k=(k_1,\ldots,k_m)
\]
where $k_i\in\{0,1,\ldots, d-1\}$, we write 
\[
\mathcal R^d_{\vec k,\epsilon \vec l}
\]
for the component of $\mathcal R^d_{m,\epsilon \vec l}$ indexed by the multiplicities of the line bundle at the orbifold points $x_i$: 
\[
\on{mult}_{x_i}L=k_i+1 (\on{mod} d).
\]
\end{definition}

The forgetful maps to Hassett moduli spaces \cite{Hassett}
\[
\theta: \mathcal R^d_{\vec k,\epsilon \vec l}\rightarrow \overline{M}_{0,m+\epsilon n}
\]
are defined by forgetting the line bundle $L$ and the orbifold structure on $C$. We define line bundles $\mathbb L_i$ on $\mathcal R^d_{\vec k,\epsilon \vec l}$ by the pullback of the cotangent line bundle at the $i$-th marked point via $\theta$.

We write 
\[
\mathcal R^{G,d}_{\vec k, \epsilon \vec l}
\]
for the graph spaces parametrizing the objects of $\mathcal R^{d}_{\vec k, \epsilon \vec l}$ with degree one maps $f:C\rightarrow \mathbb P^1$. Namely, there is an irreducible component $\hat{C}$ of $C$ such that the map restricts to an isomorphism on $\hat{C}$ and contracts the remaining components $C\setminus\hat{C}$. The third condition in Definition \ref{d-epsilon-stable-curve} changes to $\omega_{log}(\epsilon\sum[y_i])$ is ample on $\overline{C\setminus \hat{C}}$. We also use $\theta$ to denote the forgetful maps
\[
\theta: \mathcal R^{G,d}_{\vec k, \epsilon \vec l}\rightarrow \overline{M}_{0,m+\epsilon n}(\mathbb P^1,1),
\]
and define line bundles $\mathbb L_i$ on $\mathcal R^{G,d}_{\vec k,\epsilon \vec l}$ by the pullback of the cotangent line bundle at the $i$-th marked point via $\theta$.

\subsection{State Space}
let $W(x_1,\ldots,x_N)$ be a quasi-homogeneous polynomial for which there exist charges $(q_1,\ldots,q_N)\in \mathbb Q^N$ such that for any $\lambda \in \mathbb C^*$:
\[
W(\lambda^{q_1}x_1,\ldots,\lambda^{q_N}x_N)=\lambda W(x_1,\ldots,x_N).
\]
We define $d$ and $w_i$ for $i\in \{1,\ldots, N\}$ to be the unique positive integers such that $q_i=w_i/d$ for $i\in \{1,\ldots, N\}$ and $\on{gcd}(w_1,\ldots,w_N,d)=1$. We define $q=\sum q_j$. We also assume that $q_j$ are uniquely determined from $W$ and the affine variety defined by $W$ is singular only at the origin. Equivalently, the hypersurface 
\[
X_W=\{W=0\}\subset W\mathbb P^{N-1}
\]
defined by $W$ in weighted projective space is nonsingular.

We will only use a small part of the state space and in our situation it can be simplified to the following: We define the extended narrow state space associated to $W$ to be 
\[
K_W:=\mathbb Q^d
\]
and write $\{\phi_k\}_{k=0}^{d-1}$ for the basis of $K_W$. Consider the set
\[
\on{nar}:=\{k: \text{ for all } j, \langle q_j(k+1)\rangle\neq 0\},
\] 
The narrow sector is defined by restricting $K_W$ to the vectors indexed by $\on{nar}$:
\[
K_W^\prime:=\oplus_{k\in \on{nar}}\mathbb Q_{\phi_k}
\]
There is a perfect pairing on the narrow sector defined by
\[
(\phi_i,\phi_j)_W:=\delta_{i+j,d-2}.
\]
We write $\{\phi^i\}$ for the dual basis of $\{\phi_i\}$ under this pairing.

\subsection{Virtual Structure Sheaf}
For the rest of the paper, we assume that $W$ is Fermat, that is, $w_j|d$ for all $j$. As in \cite{RR}, we define the integers $s_{ij}$ and $l_{ij}$ by
\[
l_i=:s_{ij}\frac{d}{w_j}+l_{ij}, \text{ for some } 0\leq l_{ij}< \frac{d}{w_j}.
\]
We define
\[
L_j:=L^{w_j}\otimes \mathcal O(\sum\limits_i s_{ij}[y_i])
\]
By concavity \cite[Lemma 1.5]{RR}, $R^1\pi_*L_j$ is a vector bundle when $k_i\in \on{nar}$. We write
\[
W_{\vec k,\epsilon \vec l}:=\bigoplus (R^1\pi_*L_j)^\vee.
\]
The virtual structure sheaf is defined by
\[
\mathcal O^{vir}_{\vec k, \epsilon \vec l}:= e^K(W_{\vec k,\epsilon \vec l})\in K(\mathcal R^d_{\vec k,\epsilon \vec l})\otimes \mathbb Q
\]
where the K-theoretic Euler class of a bundle $V$ is defined by
\[
e^K(V):=\sum_k (-1)^k\bigwedge^k V^*.
\]
The virtual structure sheaf is the K-theoretic counterpart of the Witten class in the cohomological FJRW theory.

\subsection{Invariants}
The K-theoretic weighted FJRW invariants are defined as follows
\[
\langle \phi_{k_1}\mathbb L^{j_1},\ldots, \phi_{k_m}\mathbb L^{j_m}| \phi_{l_1},\ldots,\phi_{l_n}\rangle_{m,n}^{W,\epsilon}:=
d\cdot\chi(\mathcal R^d_{\vec k,\epsilon \vec l},\mathcal O^{vir}_{\vec k, \epsilon \vec l}\otimes (\otimes_{i}^m \mathbb L_i^{\otimes j_i}) ),
\]
where $j_i$ are nonnegative integers and $\mathbb L_i$ are tautological line bundles over $\mathcal R^d_{\vec k,\epsilon \vec l}$ corresponding to the $i$-th orbifold marked points defined in Section \ref{sec-moduli-space}.
The invariants are defined to vanish if any of the $k_i, l_i$ are not narrow or if the underlying moduli space does not exist.

Similarly, the permutation-equivariant version of the invariants 
\[
\langle \phi_{k_1}\mathbb L^{j_1},\ldots, \phi_{k_m}\mathbb L^{j_m}| \phi_{l_1},\ldots,\phi_{l_n}\rangle_{m,n}^{W,\epsilon, S_m\times S_n}:=
d\cdot\chi(\mathcal R^d_{\vec k,\epsilon \vec l}/(S_m\times S_n),\mathcal O^{vir}_{\vec k, \epsilon \vec l}\otimes (\otimes_{i}^m \mathbb L_i^{\otimes j_i}) )
\]
are defined by the K-theoretic push forward of $\mathcal O^{vir}_{\vec k, \epsilon \vec l}\otimes (\otimes_{i}^m \mathbb L_i^{\otimes j_i})$ along the projection
\[
\pi: \mathcal R^d_{\vec k,\epsilon \vec l}/(S_m\times S_n)\rightarrow [pt].
\]

Let $\Lambda$ be a $\lambda$-algebra, that is, an algebra over $\mathbb Q$ equipped with abstract Adams operations 
\[
\Psi^k:\Lambda\rightarrow \Lambda, \quad k=1,2,\ldots.
\]
Ring homomorphisms $\Psi^k$ satisfy 
\[
\Psi^r\Psi^s=\Psi^{rs}\quad \text{and}\quad \Psi^1=\on{id}.
\] 
We assume that $\Lambda$ includes the algebra of symmetric polynomials in a given number of variables and $\Lambda$ has a maximal ideal $\Lambda_+$ with the corresponding $\Lambda_+$-adic topology.

We use double bracket notation to denote generating series
\[
\llangle \phi_{k_1}\mathbb L^{j_1},\ldots,\phi_{k_l}\mathbb L^{j_l}\rrangle_l^{W,\epsilon}(t,u):=
\sum\limits_{m,n}\langle \phi_{k_1}\mathbb L^{j_1},\ldots,\phi_{k_l}\mathbb L^{j_l},{\bf t}(\mathbb L)^m|{\bf u}^n\rangle_{l+m,n}^{W,\epsilon,S_m\times S_n}
\]
where ${\bf t}(q)$ is a Laurent polynomial in $q$ with coefficients in $K^\prime_W\otimes \Lambda$,
\[
{\bf u}:=\sum u^k \phi_k\in K^\prime_W, \text{ and } {\bf t}(\mathbb L)^m:={\bf t}(\mathbb L_1),\ldots,{\bf t}(\mathbb L_m).
\]
\subsection{The $J^\epsilon$ Functions}
The permutation-equivariant K-theoretic big $J^\epsilon$-function is the following generating function
\begin{align*}
\mathcal J^\epsilon(t,u,q):=& (1-q)\phi_0\sum\limits_{\substack{a_i\geq0, i\in\on{nar}\\ \sum a_i\leq \lceil \frac 1 \epsilon \rceil}}\prod_i\left(\frac{u^i\phi_i}{1-q}\right)^{a_i}\prod_{j=1}^N\prod\limits_{\substack{0\leq b <q_j+ \sum_i a_i\langle i q_j\rangle\\ \langle b\rangle=\langle q_j+\sum_i a_i i q_j\rangle}}(1-q^{b})\\
& +{\bf t}(1/q)+\sum_k \phi^k\llangle \frac{\phi_k}{1-q\mathbb L_1}\rrangle_1^\epsilon,
\end{align*}
where the multiplication on $K_W$ is defined by
\[
\phi_i\cdot \phi_j:=\phi_{i+j \on{mod} d}.
\]

When $\epsilon >1$, we have the big $J$-function
\[
\mathcal J^\infty(t,1/q)=(1-1/q)\phi_0+{\bf t}(q)+\sum_k \phi^k\llangle \frac{\phi_k}{1-\mathbb L_1/q}\rrangle_1^\infty(t).
\]
We write $\mathcal L_{S_\infty}$ for the range of the big $J$-function in permutation-equivariant quantum K-theory of Landau-Ginzburg model.

We write $\mathcal K$ for the space of rational functions of $q$ with coefficients from $K^\prime_W\otimes\Lambda$. The space $\mathcal K$ is equipped with a symplectic form
\[
\Omega(f,g)=:=-[\on{Res}_{q=0}+\on{Res}_{q=\infty}](f(q^{-1}),g(q))_W\frac{\,d q}{q}.
\]
It can be decomposed into the direct sum
\[
\mathcal K=\mathcal K_+\oplus \mathcal K_-,
\]
where $\mathcal K_+$ is the subspace of Laurent polynomials in $q$ and $\mathcal K_-$ is the complementary subspace of rational functions of $q$ regular at $q=0$ and vanishing at $q=\infty$.

Now we state the main result of our paper:

\begin{theorem}\label{main-theorem}
For all $\epsilon>0$, $\mathcal J^\epsilon(t,u,1/q)$ is a $\mathcal K[[u]]$-valued point of $\mathcal L_{S_\infty}$, in other words, $\mathcal J^\epsilon(t,u,1/q)$ is a formal series of the form
\[
(1-1/q)\phi_0+\hat{\bf t}(q)+\sum_k \phi^k\llangle \frac{\phi_k}{1-\mathbb L_1/q}\rrangle_1^\infty(\hat{t}),
\]
for some $\hat{\bf t}(q)={\bf t}(q)+O(u)\in \mathcal K_+[[u]]$.
\end{theorem}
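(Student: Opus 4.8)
The plan has two stages: first show that $\mathcal{J}^\epsilon(t,u,1/q)$ lies on $\mathcal{L}_{S_\infty}$ at all, and then identify the input with ${\bf t}(q)+O(u)$. The second stage is short. Setting $u=0$ annihilates every light-point insertion; $(d,\epsilon)$-stability with no light points is exactly ordinary $d$-spin stability (Definition~\ref{d-epsilon-stable-curve} with $n=0$), so each moduli space then occurring coincides with its $\epsilon=\infty$ counterpart, and the hypergeometric prefactor collapses to $(1-q)\phi_0$ because each product $\prod_b(1-q^b)$ is empty when $0<q_j<1$. Hence $\mathcal{J}^\epsilon(t,0,1/q)=\mathcal{J}^\infty(t,1/q)$. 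Since a point of $\mathcal{L}_{S_\infty}$ determines its input uniquely (in the $\Lambda_+$-adic topology), the first stage yields a well-defined $\hat{\bf t}(q)\in\mathcal{K}_+[[u]]$ with $\mathcal{J}^\epsilon(t,u,1/q)=\mathcal{J}^\infty(\hat t,1/q)$, and specializing $u=0$ forces $\hat{\bf t}|_{u=0}={\bf t}$, i.e.\ $\hat{\bf t}(q)={\bf t}(q)+O(u)$.

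For the first stage I would use $\mathbb{C}^*$-virtual localization on the graph spaces $\mathcal{R}^{G,d}_{\vec k,\epsilon\vec l}$, with $\mathbb{C}^*$ acting on the target $\mathbb{P}^1$, in the K-theoretic form of Tonita~\cite{Tonita} as adapted to the permutation-equivariant, orbifold, $\lambda$-algebra setting in Tseng--You~\cite{TY}. Since $\mathcal{R}^{G,d}_{\vec k,\epsilon\vec l}$ is a proper Deligne--Mumford stack, the $\mathbb{C}^*$-equivariant holomorphic Euler characteristic of $\mathcal{O}^{vir}_{\vec k,\epsilon\vec l}$ twisted by the tautological insertions is a Laurent polynomial in the equivariant parameter $q$; virtual localization writes it as a sum over fixed loci, each a rational function of $q$ with poles only at $q=0,\infty$ and at roots of unity. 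The fixed loci are indexed by how the orbifold markings, the light points, the $d$-spin bundle and the attached bubble trees distribute over $0,\infty\in\mathbb{P}^1$, with the distinguished component $\hat C$ mapping isomorphically onto $\mathbb{P}^1$. On the distinguished fixed locus $F_0$ --- all orbifold insertions and the nodal spin curve over $0$, glued to $\hat C$ at a node, the light points free to lie over $0$ or on $\hat C$ --- the normalization sequence of the curve splits $W_{\vec k,\epsilon\vec l}$ into an $\hat C$-part and an over-$0$ part: the $\hat C$-part contributes $e^K$ of $H^1(\hat C,L_j|_{\hat C})$, an orbifold Riemann--Roch computation on $\mathbb{P}^1$ which yields exactly $\prod_{j=1}^N\prod_b(1-q^b)$; smoothing the node contributes $\phi_k/(1-q\mathbb{L}_1)$; $\epsilon$-stability of the light points on $\hat C$ truncates the sum to $\sum_i a_i\le\lceil 1/\epsilon\rceil$; and the over-$0$ part together with its bubble moduli assembles the correlators $\llangle\phi_k/(1-q\mathbb{L}_1)\rrangle_1^\epsilon$. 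Thus the $F_0$-contribution, read as a function of $q$, is precisely $\mathcal{J}^\epsilon$.

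Consequently $\mathcal{J}^\epsilon(q)$ equals a Laurent polynomial minus the sum of the remaining fixed-locus contributions, so its poles in $q$ --- all at roots of unity --- have residues computed from those remaining contributions via the splitting of the fixed-locus terms into correlators over the components. Givental's characterization of $\mathcal{L}_{S_\infty}$ (the ``adelic'' characterization; cf.\ the analysis in \cite{TY}) asserts that an element of $\mathcal{K}[[u]]$ with these poles lies on $\mathcal{L}_{S_\infty}$ precisely when the principal part at each primitive $m$-th root of unity $\zeta\ne1$ is the one dictated by the $\Psi^m$-twisted K-theoretic FJRW theory of $W$, the principal part at $\zeta=1$ is dictated by the fake (cohomological) FJRW cone, and the behaviour at $q=0,\infty$ is the evident dilaton-shift-plus-Laurent-polynomial one. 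The $q=0,\infty$ conditions are read off from the formula; the residues at $\zeta\ne1$ are matched to the $\Psi^m$-twisted theories by identifying the relevant remaining fixed loci through their splitting; and the residue at $\zeta=1$ --- the delicate case, since it is governed by cohomology --- is matched to the fake cohomological cone by invoking the cohomological wall-crossing of Ross--Ruan~\cite{RR} and transporting it across the Euler--Maclaurin/Todd comparison between K-theoretic and cohomological FJRW invariants, as in \cite{TY}. Granting these, $\mathcal{J}^\epsilon(t,u,1/q)\in\mathcal{L}_{S_\infty}$, and with the first stage the theorem follows.

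The main obstacle is the verification just outlined: carrying out K-theoretic virtual localization on these orbifold moduli of $(d,\epsilon)$-stable curves with light points, and above all comparing the principal parts at roots of unity with the twisted and fake theories. One must check that $\mathcal{O}^{vir}_{\vec k,\epsilon\vec l}=e^K(W_{\vec k,\epsilon\vec l})$ restricts to the fixed loci compatibly with the gluing maps, propagate the $S_m\times S_n$-equivariance and the $\lambda$-algebra structure through the localization, confirm that the Fermat hypergeometric factor produced by $F_0$ is the $I$-function of the $\epsilon=\infty$ theory, and --- at $\zeta=1$ --- control the non-polynomial Euler--Maclaurin correction required to invoke \cite{RR}.
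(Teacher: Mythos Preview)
Your plan is plausible but takes a genuinely different route from the paper's. Both arguments localize on the graph spaces $\mathcal{R}^{G,d}_{\vec k,\epsilon\vec l}$ under the $\mathbb{C}^*$-action on $\mathbb{P}^1$, and your second-stage observation that $\mathcal{J}^\epsilon|_{u=0}=\mathcal{J}^\infty$ is exactly property~\textbf{(3)} in the paper's characterization lemma. The divergence is in how membership in $\mathcal{L}_{S_\infty}$ is detected.

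The paper follows the Birkhoff/two-point-function strategy of Ciocan-Fontanine--Kim and Ross--Ruan. It localizes a \emph{two}-pointed graph-space correlator (one distinguished orbifold marking pinned to $\infty$, one distinguished light marking pinned to $0$) and identifies the result with the pairing $(\partial_{u^r}\mathcal{J}^\epsilon(t,u,q),\,\partial_{t_0^s}\mathcal{J}^\epsilon(t,u,1/q))$; since the graph-space Euler characteristic is regular in $q$, this pairing has no pole at roots of unity (Lemma~\ref{lemma:J-function-relation}). A short characterization lemma (Lemma~\ref{lemma:characterization}) then shows that this no-pole condition, together with the evident shape of $\mathcal{J}^\epsilon$ and the base case $u=0$, pins down $\mathcal{J}^\epsilon$ by an elementary recursion on $u$-degree, forcing it onto $\mathcal{L}_{S_\infty}$. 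The argument is self-contained: it borrows only the fixed-locus geometry from \cite{RR} (their Lemmas~1.5 and~2.1), never the cohomological wall-crossing theorem itself.

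Your route is Givental's adelic characterization, as in \cite{Tonita} and \cite{TY}: realize $\mathcal{J}^\epsilon$ as a single fixed-locus contribution and then verify the principal parts at every root of unity against the fake and $\Psi^m$-twisted FJRW cones. This should work and has the virtue of conceptual uniformity with those references, but it costs exactly the machinery you flag --- setting up the fake K-theoretic FJRW cone, the Kawasaki stratification on these orbifold Hassett-type moduli, and at $\zeta=1$ the Euler--Maclaurin transport so that the cohomological theorem of \cite{RR} can be fed in. The paper's two-point recursion bypasses all of this: no fake theory, no twisted cones, and no appeal to the cohomological result are needed.
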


\section{Proof of Theorem \ref{main-theorem}}

The proof of Theorem \ref{main-theorem} will follow from Lemma \ref{lemma:J-function-relation}, Lemma \ref{lemma:characterization} and the fact that the $\mathcal J^\epsilon$-function satisfies properties {\bf (1)}, {\bf (2)}, and {\bf (3)} of  Lemma \ref{lemma:characterization}, by the definition of the $\mathcal J^\epsilon$-function. The technique we use in the proof of  Lemma \ref{lemma:J-function-relation} and Lemma \ref{lemma:characterization} is called K-theoretic virtual localization (cf., for example, \cite[Theorem 3.3]{Qu}).

\begin{lemma}\label{lemma:J-function-relation}
For every $\epsilon >0$, the series
\begin{equation}\label{J-function-relation}
(\partial_{u^r}\mathcal J^\epsilon(t,u,q),\partial_{t_0^s}\mathcal J^\epsilon(t,u,1/q))
\end{equation}
has no pole at roots of unity in $q$ for all narrow $r, s$.
\end{lemma}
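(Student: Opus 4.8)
The plan is to prove the no-pole statement by K-theoretic virtual localization on the graph spaces $\mathcal R^{G,d}_{\vec k,\epsilon\vec l}$. Both derivatives $\partial_{u^r}\mathcal J^\epsilon(t,u,q)$ and $\partial_{t_0^s}\mathcal J^\epsilon(t,u,1/q)$ will be realized as localization contributions from the two torus-fixed loci over the points $0$ and $\infty$ of $\mathbb P^1$: inserting $\phi_r$ (resp.\ $\phi_s$) through a light marked point $y$ that is forced to lie over $0$ (resp.\ $\infty$) accounts for the $u^r$- and $t_0^s$-derivatives, while the remaining geometry — the ``distinguished'' orbifold marked point carrying the input $\frac{\phi_k}{1-q\mathbb L_1}$ — contributes the $q$-dependence. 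First I would set up the $\mathbb C^*$-action on $\mathbb P^1$ and describe the fixed loci of the induced action on $\mathcal R^{G,d}_{\vec k,\epsilon\vec l}$: they are products of moduli spaces $\mathcal R^d_{\vec k',\epsilon\vec l'}$ of the type appearing in the definition of the invariants, glued along the nodes lying over $0$ and $\infty$, together with a distinguished ``over-$\mathbb P^1$'' component carrying the degree-one map. The virtual structure sheaf $e^K(W_{\vec k,\epsilon\vec l})$ decomposes multiplicatively over these pieces by the normalization/concavity properties from \cite[Lemma 1.5]{RR}, and the virtual normal bundle contributes the smoothing-node factors $1-q^{\pm 1}\mathbb L$.

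Next I would identify, via the string/dilaton-type relations and the explicit shape of $\mathcal J^\epsilon$, the full localization sum over $\mathcal R^{G,d}_{\vec k,\epsilon\vec l}$ with the pairing $(\partial_{u^r}\mathcal J^\epsilon(t,u,q),\partial_{t_0^s}\mathcal J^\epsilon(t,u,1/q))$ — up to the hypergeometric ``zero-mode'' term, which comes from fixed loci where the distinguished component carries no orbifold insertion and must be handled separately but contributes only the explicit product $\prod_j\prod_b(1-q^b)$, visibly regular at roots of unity away from $q=0,\infty$. The point is that the holomorphic Euler characteristic $\chi$ of the graph space is computed two ways: directly (which produces a genuinely global object with no spurious poles in $q$ away from $0$ and $\infty$, since the only $q$-dependence in the integrand on the graph space comes from the weight of the $\mathbb C^*$-action and localization is an identity of rational functions whose total has poles only where the fixed-point contributions can fail to be finite, namely at the torus weights $q=0,\infty$), and via localization (which produces precisely the sum of products of the two $\mathcal J^\epsilon$-pieces, one in $q$ and one in $1/q$). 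Subtracting the $q=0,\infty$ contributions and matching with the definition of $\mathcal J^\epsilon$ yields that the pairing is regular at all roots of unity.

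The main obstacle I expect is bookkeeping the \emph{orbifold} structure at the nodes over $0$ and $\infty$ under the Fermat hypothesis: one must check that the multiplicities $k_i$ and the line-bundle splitting $L_j = L^{w_j}\otimes\mathcal O(\sum_i s_{ij}[y_i])$ are compatible with the gluing, so that the restriction of $e^K(W_{\vec k,\epsilon\vec l})$ to a fixed locus really factors as the product of the virtual structure sheaves of the constituent spaces times an explicit $q$-dependent contribution from the distinguished component (this is where the hypergeometric modification, the factors $\langle iq_j\rangle$ and the range $0\le b<q_j+\sum_i a_i\langle iq_j\rangle$ in the formula for $\mathcal J^\epsilon$, enter). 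A secondary subtlety is that the $\epsilon$-stability condition can force contraction or stabilization of rational tails near the special points as one moves the light markings; one must verify that these stabilizations do not introduce new $q$-poles, i.e.\ that the $\epsilon$-wall structure is ``transverse'' to the localization computation. Once these compatibilities are in place, the regularity at roots of unity follows formally, since a rational function of $q$ computed as $\chi$ of a single proper Deligne--Mumford stack can have poles in the equivariant parameter $q$ only at $0$ and $\infty$.
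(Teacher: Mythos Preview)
Your approach is the same as the paper's: compute an equivariant Euler characteristic on the graph space $\mathcal R^{G,d}$, observe that it is pole-free at roots of unity by construction, and then identify it with the pairing \eqref{J-function-relation} via $\mathbb C^*$-localization, with the stable fixed loci producing the products of correlators and the unstable loci producing the explicit hypergeometric factors.

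Two points in your setup need correction. First, $\phi_s$ must be inserted at an \emph{orbifold} (heavy) marked point, not a light one: $\partial_{t_0^s}$ differentiates the variable attached to heavy markings, and accordingly the paper's graph-space correlator is indexed by $(\vec k,s),\epsilon(\vec l,r)$, i.e.\ $s$ is appended to the multiplicity vector and $r$ to the twist vector. Putting both at light points would compute $(\partial_{u^r}\mathcal J^\epsilon,\partial_{u^s}\mathcal J^\epsilon)$, which is not what is claimed. Second, there is no ``distinguished orbifold marked point carrying $\frac{\phi_k}{1-q\mathbb L_1}$'' in the graph-space integrand itself; the only $q$-dependence in the integrand comes from the equivariant insertions $\on{ev}_{m+1}^*(p_\infty)$ and $\on{ev}_{m+n+2}^*(p_0)$, whose restrictions to the wrong fixed point vanish and thereby force $x_{m+1}\mapsto\infty$, $y_{n+1}\mapsto 0$. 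The factors $\frac{1}{1-q^{\pm 1}\mathbb L}$ arise only after localization, from the node-smoothing (plus ghost-automorphism) contributions to $e^K_{\mathbb C^*}(N_F)$, exactly as you say later in your outline. With these two fixes your plan matches the paper's proof.
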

\begin{proof}
Consider the $\mathbb C^*$-action on $\mathbb P^1$
\[
\lambda[z_0,z_1]:=[\lambda z_0,z_1], \, \lambda \in \mathbb C^*.
\]
We consider elements $p_0,p_\infty\in K^0_{\mathbb C^*}(\mathbb P^1)$ defined by the restriction to the fixed points:
\[
p_0|_0=q, p_0|_\infty=1, \quad \text{and}\quad p_\infty|_0=1, p_\infty|_\infty=1/q.
\]
The $\mathbb C^*$ action on $\mathbb P^1$ naturally induces an action on $\mathcal R^{G,d}_{\vec k, \epsilon \vec l}$ and $\mathcal O^{G, vir}_{\vec k,\epsilon \vec l}$. Consider the equivariant series
\begin{align}\label{graph-space-series}
& \frac 1 d \sum\langle {\bf t}(L)^m,\phi_s|{\bf u}^n,\phi_r|\on{ev}_{m+1}^*(p_\infty)\otimes \on{ev}_{m+n+2}^*(p_0)\rangle_{m+1,n+1}^{G,\epsilon}:=\\
\notag& \sum \chi(\mathcal R_{(\vec k, s),\epsilon(\vec l,r)}^{G,d},\mathcal O^{vir}_{(m+1),\epsilon(n+1)}\otimes{\bf t}(1/L)^m\otimes {\on u}^n\otimes\on{ev}_{m+1}^*(p_\infty)\otimes \on{ev}_{m+n+2}^*(p_0))
\end{align}
where $\on{ev}_i$ are equivariant evaluation maps
\[
\on{ev}_i:\mathcal R_{(\vec k, s),\epsilon(\vec l,r)}^{G,d}\rightarrow \mathbb P^1.
\]
By definition, the equivariant series (\ref{graph-space-series}) has no pole at roots of unity in $q$. Therefore it remains to prove the claim: (\ref{J-function-relation})=(\ref{graph-space-series}). This is done via $\mathbb C^*$-localization calculation.

In the fixed loci, marked points and nodes of the curves are mapped to $0$ and $\infty$ via $f$. Invariants vanish when restricted to loci where $f(x_{m+1})=0$ or $f(y_{n+1})=\infty$. We denote a fixed loci by
\[
\iota: F_{\vec k_0,\vec l_0}^{\vec k_\infty, \vec l_\infty}\hookrightarrow \mathcal R_{(\vec k, s),\epsilon(\vec l,r)}^{G,d},
\]
where $\vec k_0,\vec k_\infty$ is a splitting of the vector $\vec k$ into subvectors of lengths $m_0,m_\infty$ over $0$ and $\infty$; $\vec l_0,\vec l_\infty$ is a splitting of the vector $\vec l$ into subvectors of lengths $n_0,n_\infty$ over $0$ and $\infty$.
By localization formula, the equivariant series (\ref{graph-space-series}) equals
\begin{equation}\label{localization-formula}
\sum_F \chi\left(F,\frac{\iota^*(\mathcal O^{vir}_{(\vec k, s),\epsilon(\vec l,r)}\otimes{\bf t}(1/L)^{\vec k}\otimes {\bf u}^{\vec l}\otimes\on{ev}_{m+1}^*(p_\infty)\otimes \on{ev}_{m+n+2}^*(p_0))}{e^K_{\mathbb C^*}(N_F)}\right)
\end{equation}
where the T-equivariant K-theoretic Euler class of a bundle $V$ is defined by
\[
e^K_T(V):=\on{tr}_{\lambda\in T}\left(\sum_k (-1)^k\Lambda^k V^*\right)
\]
and 
\[
{\bf t}(1/L)^{\vec k}={\bf t}(1/L_1)_{k_1}\otimes\cdots\otimes {\bf t}(1/L_m)_{k_m}
\]
with ${\bf t}(q)_k$ the coefficient of $\phi_k$ in ${\bf t}(q)$, similar for the notation ${\bf u}^{\vec l}$. 

A fixed locus is called stable if it has a node over both $0$ and $\infty$. For stable fixed loci, we have
\[
F_{\vec k_0,\vec l}^{\vec k_\infty,\vec l_\infty}\cong \mathcal R^d_{(\vec k_0,k),\epsilon(\vec l_0,r)}\times \mathcal R^d_{(\vec k_\infty,s,d-2-k),\epsilon \vec l_\infty}
\]
where $k$ is uniquely determined.
Following the analysis of stable terms in \cite[Lemma 2.1]{RR}, we have
\[
\iota^*(W_{(\vec k, s),\epsilon(\vec l,r)}^{G})\cong W_{(\vec k_0,k),\epsilon(\vec l_0,r)}\oplus W_{(\vec k_\infty,s,d-2-k),\epsilon \vec l_\infty} 
\]
Hence
\[
\iota^* e^K(W_{(\vec k, s),\epsilon(\vec l,r)}^{G})=e^K(W_{(\vec k_0,k),\epsilon(\vec l_0,r)})\otimes e^K(W_{(\vec k_\infty,s,d-2-k),\epsilon \vec l_\infty}).
\]
Then we compute the contribution of $\mathbb C^*$-equivariant K-theoretic Euler class of the normal bundle $N_F$. The contribution from smoothing the node at $0$ is
\[
1-(q/L)^{1/d}
\]
In addition there are $d-1$ contributions from ghost automorphisms corresponding to the node at $0$, they are
\[
1-\zeta^k(q/L)^{1/d}, \quad 1\leq k \leq d-1,
\]
for a primitive $d$-th root of unity.
Recall that these contributions are in the denominator of (\ref{localization-formula}), using the identity
\[
\sum_{k=1}^{d-1}\frac{1}{1-\zeta^k x}+\frac{1}{1-x}=\frac{d}{1-x^d}
\]
and adding these contribution up, we have contribution
\[
\frac{1-q/L}{d}
\]
in the denominator of (\ref{localization-formula}).
Similarly, the contribution from smoothing the node at $\infty$ and the contributions from ghost automorphisms add up to
\[
\frac{1-1/(qL)}{d}.
\]
The contribution from deforming the map to $\mathbb P^1$ is 
\[
(1-q)(1-1/q).
\]
Combining everything together, for stable fixed loci, we have the contribution of stable terms to (\ref{localization-formula}) is equal to
\begin{equation}\label{stable-term}
\langle {\bf t}(1/L)^{\vec k_0},\frac{\phi_k}{1-qL_{m_0+1}}|{\bf u}^{\vec l_0},\phi_r\rangle^\epsilon_{m_0+1,n_0+1}\cdot
\langle {\bf t}(1/L)^{\vec k_\infty},\phi_s,\frac{\phi_{d-2-k}}{1-1/(qL_{m_\infty+2})}|{\bf u}^{\vec l_\infty}\rangle^\epsilon_{m_\infty+2,n_\infty}.
\end{equation}
The first factor of (\ref{stable-term}) corresponds to the coefficient of $\phi^k$ in the stable terms of $\partial_{u^r}\mathcal J^\epsilon(t,u,q)$ and the second factor corresponds to the coefficient of $\phi_k$ in the stable terms of $\partial_{t_0^s}\mathcal J^\epsilon(t,u,1/q)$.

Unstable terms appear in the following two cases. The first case is when
\[
m_\infty=n_\infty=0.
\]
Its contribution to (\ref{localization-formula}) is equal to
\begin{equation}\label{unstable-term-1}
\langle {\bf t}(1/L)^{\vec k_0},\frac{\phi_s}{1-qL_{m_0+1}}|{\bf u}^{\vec l_0},\phi_r\rangle^\epsilon_{m_0+1,n_0+1}\cdot 1,
\end{equation}
where the second factor $1$ corresponds to the coefficient of $\phi_s$ in the unstable terms of $\partial_{t_0^s}\mathcal J^\epsilon(t,u,1/q)$.

The second case is when 
\[
m_0=0 \text{ and } n_0+1\leq 1/\epsilon.
\]
Following the analysis of unstable terms in \cite[Lemma 2.1]{RR}, we have
\[
\iota^* W^G\cong W_\infty\oplus\left(\bigoplus_j (R^1 \pi_* L_j|_{\hat{C}})^\vee\right),
\]
and \v Cech representatives of cohomology $H^1((\hat{C},L_j|_{\hat C})$ are
\[
\left\langle \frac{1}{x_0^{db}x_1^{q_j+\langle q_j r\rangle+\sum_i \langle q_j(\vec l_0)_i\rangle-b}}\left|  0<b <q_j+\langle q_j r\rangle+\sum_i \langle q_j(\vec l_0)_i\rangle, \langle b\rangle=\langle q_j+ q_j r+\sum_i q_j(\vec l_0)_i\rangle \right.\right\rangle
\]
where $x_0,x_1$ are the orbifold coordinates on $\hat C$, related to the coarse coordinates on $\mathbb P^1$ by
\[
z_0=x_0^d, z_1=x_1.
\]
Hence, we have
\[
e^K(\bigoplus_j (R^1 \pi_* L_j|_{\hat{C}})^\vee)=\prod_{j=1}^N \prod_{\substack{0<b <q_j+\langle q_j r\rangle+\sum_i \langle q_j(\vec l_0)_i\rangle\\ \langle b\rangle=\langle q_j+ q_j r+\sum_i q_j(\vec l_0)_i\rangle}}(1-q^b).
\]
The contribution from deforming the $n_0+1$ smooth marked points is 
\[
(1-q)^{n_0+1}.
\]
Hence, the contribution from the second case of the unstable loci is
\begin{equation}\label{unstable-term-2}
\left(\frac{u^{\vec l_0}}{(1-q)^{n_0}}\prod_{j=1}^N \prod_{\substack{0<b <q_j+\langle q_j r\rangle+\sum_i \langle q_j(\vec l_0)_i\rangle\\ \langle b\rangle=\langle q_j+ q_j r+\sum_i q_j(\vec l_0)_i\rangle}}(1-q^b)\right)\cdot \langle {\bf t}(1/L)^{\vec k_\infty},\phi_s,\frac{\phi_{k}}{1-1/(qL_{m_\infty+2})}|{\bf u}^{\vec l_\infty}\rangle^\epsilon_{m_\infty+2,n_\infty},
\end{equation}
where
\[
k= r+\sum(\vec l_0)_i \mod d.
\]
The factor 
\[
\left(\frac{u^{\vec l_0}}{(1-q)^{n_0}}\prod_{j=1}^N \prod_{\substack{0<b <q_j+\langle q_j r\rangle+\sum_i \langle q_j(\vec l_0)_i\rangle\\ \langle b\rangle=\langle q_j+ q_j r+\sum_i q_j(\vec l_0)_i\rangle}}(1-q^b)\right)
\]
corresponds to the coefficient of $\phi_k$ in the unstable terms of $\partial_{u^r}\mathcal J^\epsilon(t,u,z)$.

Adding all the contributions from stable terms (\ref{stable-term}) and unstable terms (\ref{unstable-term-1}), (\ref{unstable-term-2}) together proves (\ref{J-function-relation})=(\ref{graph-space-series}), hence proves the lemma.
\end{proof}

\begin{lemma}\label{lemma:characterization}
Suppose $F(t,u,q)\in \mathcal K[[u]]$ has the form
\[
F(t,u,q)=(1-q)\phi_0+{\bf t}(1/q)+f(u,1/q)+\bar{F}(t,u,q)
\]
satisfies
\begin{description} 
\item[(1)] $f(u,q)$ is a Laurent polynomial in $q$ with coefficient in $K_W^\prime[[u]]$ and satisfies $f(0,q)=0$,

\item[(2)] $\bar{F}(t,u,q)\in \mathcal K_-[[u]]$ only has terms of degree at least $2$ in $t,u$ and is of the form
\[
\sum f_{\xi,\vec m,\vec n,j,s}\frac{t^{\vec m}u^{\vec n}(\xi q)^j}{(1-\xi q)^{j+1}}\phi^s,
\]
where $\xi$ is a root of unity, $t^{\vec m}=\sum (t^k_j)^{m^k_j}$ and similar for $u^{\vec n}$.
\item[(3)] $F(t,0,1/q)\in \mathcal L_{S_\infty}$.
\end{description}
Then $F(t,u,1/q)\in \mathcal L_{S_\infty}$ if and only if the series
\begin{equation}\label{partial-der-series}
(\partial_{u^r}F(t,u,q),\partial_{t_0^s}F(t,u,1/q))
\end{equation}
has no pole at roots of unity in $q$ for all narrow $r, s$. 
\end{lemma}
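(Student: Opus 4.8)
The plan is to prove this as a formal statement about the overruled Lagrangian cone $\mathcal{L}_{S_\infty}$; no new geometry enters here, the geometric content being packaged in Lemma~\ref{lemma:J-function-relation} and in hypothesis \textbf{(3)}. I would use the standard structure of $\mathcal{L}_{S_\infty}$ (cf.\ \cite{Givental15}, and the cohomological analogue in \cite{RR}): it is an overruled cone, so at each $f\in\mathcal{L}_{S_\infty}$ the tangent space is $T_f=\mathbf S_f(q)\,\mathcal{K}_+$ for the fundamental solution $\mathbf S_f$, which has poles in $q$ only at roots of unity, is invertible at $q=0$ and $q=\infty$, and satisfies the unitarity relation $\big(\mathbf S_f(1/q)\,a,\ \mathbf S_f(q)\,b\big)_W=(a,b)_W$; moreover a point of $\mathcal{L}_{S_\infty}$ is determined by its $\mathcal{K}_+$-projection, so $\mathcal{L}_{S_\infty}$ is (formally, near any base point) a graph over $\mathcal{K}_+$ parametrized by $\hat{\mathbf t}$, and $\partial_{t_0^s}$ applied to a value of the big $J$-function equals $\mathbf S_f(q)\phi_s$.

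First, the direction $(\Rightarrow)$. Suppose $F(t,u,1/q)\in\mathcal{L}_{S_\infty}$, say $F(t,u,x)=\mathcal{J}^\infty(\hat{\mathbf t}(t,u),x)$. Differentiating and using the chain rule through $\hat{\mathbf t}$, both $\partial_{u^r}F(t,u,1/q)$ and $\partial_{t_0^s}F(t,u,1/q)$ lie in $T_F=\mathbf S_F(q)\mathcal{K}_+$; write them as $\mathbf S_F(q)p_r(q)$ and $\mathbf S_F(q)\tilde p_s(q)$ with $p_r,\tilde p_s$ Laurent polynomials in $q$ (coefficients in $K_W'\otimes\Lambda$). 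Then $\partial_{u^r}F(t,u,q)=\mathbf S_F(1/q)p_r(1/q)$, and by unitarity
\[
\big(\partial_{u^r}F(t,u,q),\ \partial_{t_0^s}F(t,u,1/q)\big)_W=\big(\mathbf S_F(1/q)p_r(1/q),\ \mathbf S_F(q)\tilde p_s(q)\big)_W=\big(p_r(1/q),\ \tilde p_s(q)\big)_W,
\]
which is a Laurent polynomial in $q$, hence has no pole at roots of unity. This settles the easy implication.

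Next, the direction $(\Leftarrow)$. Assume (\ref{partial-der-series}) has no pole at roots of unity for all narrow $r,s$. Using the graph structure, choose the unique formal family $\hat{\mathbf t}(t,u)$ — it exists because, by \textbf{(1)}--\textbf{(2)}, $F(t,u,1/q)$ is a formal deformation of $F_0:=F(t,0,1/q)$ that is trivial at $u=0$ — for which $G(t,u,1/q):=\mathcal{J}^\infty(\hat{\mathbf t}(t,u),1/q)$ has the same $\mathcal{K}_+$-projection as $F(t,u,1/q)$; by \textbf{(3)} and injectivity of the projection, $G(t,0,1/q)=F_0$. Set $D:=F-G$. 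Then $D\in\mathcal{K}_-[[u]]$ (in the $q\mapsto 1/q$ presentation), $D$ has $u$-adic order $\geq 1$, and — combining the explicit shape \textbf{(2)} of $\bar F$ with the rationality of the $J$-function — $D$ has poles in $q$ only at roots of unity. Applying the $(\Rightarrow)$ step to $G$ (legitimate, since $\partial_{u^r}G,\partial_{t_0^s}G$ are tangent vectors by the chain rule) shows that $(\partial_{u^r}G(t,u,q),\partial_{t_0^s}G(t,u,1/q))$ has no pole at roots of unity, so subtracting it from the hypothesis on $F$ gives that
\[
\big(\partial_{u^r}D(q),\ \partial_{t_0^s}F(1/q)\big)_W+\big(\partial_{u^r}G(q),\ \partial_{t_0^s}D(1/q)\big)_W
\]
has no pole at roots of unity for all narrow $r,s$. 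Now I would induct on the $u$-adic order of $D$, aiming at $D=0$. If $D\neq 0$ has order $N\geq 1$, write its degree-$N$ part as $\sum_{|\vec n|=N}D_{\vec n}(t,q)u^{\vec n}$; extracting the $u$-degree-$(N-1)$ part of the displayed series (the second summand, of $u$-order $\geq N$, contributes nothing) yields that $\big(D_{\vec n}(q),\ \partial_{t_0^s}F_0(1/q)\big)_W$ has no pole at roots of unity for all $|\vec n|=N$ and narrow $s$. Since $\partial_{t_0^s}F_0(1/q)=\mathbf S_{F_0}(q)\phi_s$ and the functionals $(\,\cdot\,,\phi_s)_W$ recover all coordinates as $s$ ranges over narrow indices, unitarity turns this into: $r(q):=\mathbf S_{F_0}(1/q)^{-1}D_{\vec n}(q)$ has no pole at roots of unity. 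But $D_{\vec n}\in\mathcal{K}_-$ is regular at $q=0$, vanishes at $q=\infty$, and has poles only at roots of unity, while $\mathbf S_{F_0}(1/q)^{-1}$ is invertible at $q=0$ and $q=\infty$ with poles only at roots of unity; hence $r$ is regular on all of $\mathbb{P}^1$, so $r$ is constant, and being zero at $q=\infty$ it is identically zero, whence $D_{\vec n}=0$. This raises the $u$-adic order of $D$; iterating, $D=0$, i.e.\ $F(t,u,1/q)=G(t,u,1/q)\in\mathcal{L}_{S_\infty}$.

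The hard part will be this last pole-chase together with the bookkeeping needed to set it up correctly: one must control the $\mathcal{K}_+/\mathcal{K}_-$ splitting precisely enough to know that the error term $D$ is rational with poles confined to roots of unity, and that the fundamental solution is invertible and regular at $q=0$ and $q=\infty$ — it is exactly there that hypotheses \textbf{(1)}--\textbf{(2)} and the rationality/pole structure of $\mathbf S$ are used, allowing ``regular at roots of unity'' to be upgraded to ``regular on $\mathbb{P}^1$.'' Constructing the comparison point $G$ and invoking that a point of $\mathcal{L}_{S_\infty}$ is pinned down by its $\mathcal{K}_+$-part is the other place requiring care; the implication $(\Rightarrow)$ is routine.
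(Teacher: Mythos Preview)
Your proof is correct but takes a genuinely different route from the paper's.

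For the forward direction $(\Rightarrow)$, the paper does \emph{not} argue formally via unitarity of the $\mathbf{S}$-operator; instead it re-runs the localization computation of Lemma~\ref{lemma:J-function-relation} on the graph space at $\epsilon=\infty$, identifying the pairing~\eqref{partial-der-series} with a graph-space correlator
\[
\tfrac{1}{d}\,\llangle \partial_{u^r}\hat{\mathbf t}(L),\ \phi_s \mid \on{ev}_1^*(p_0)\otimes\on{ev}_2^*(p_\infty)\rrangle_2^{G,\infty},
\]
which is manifestly regular at roots of unity. So geometry does re-enter the paper's proof of this lemma, contrary to your opening remark.

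For the converse $(\Leftarrow)$, the paper works directly with the explicit pole expansion in hypothesis~\textbf{(2)}: fixing a root of unity $\xi$ and assuming the coefficients $f_{\xi,\vec m',\vec n',j',s'}$ are known for $(|\vec n'|,|\vec m'|)\leq(|\vec n|,|\vec m|)$, the coefficient of $\frac{t^{\vec m}u^{\vec n}(\xi q)^j}{(1-\xi q)^{j+1}}$ in~\eqref{partial-der-series} is $(n^r+1)f_{\xi,\vec m,(\vec n,r),j,s}$ plus already-determined terms; the no-pole condition forces this sum to vanish, so each coefficient is recursively determined from the $u=0$ data. Since the cone expression satisfies the same recursion and the same initial condition (by~\textbf{(3)}), equality follows. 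This is entirely elementary and never invokes $\mathbf{S}$ or its analytic properties.

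Your argument trades this hands-on recursion for the structural machinery of the overruled cone: you package both directions into unitarity of $\mathbf{S}$ and a Liouville-type pole-chase on $\mathbb{P}^1$. This is cleaner and more conceptual, and transports readily to any setting where Givental's cone formalism is available; the price is that you must import the properties of $\mathbf{S}$ (rationality with poles only at roots of unity, invertibility at $q=0$ and $q=\infty$, unitarity) in the permutation-equivariant K-theoretic FJRW setting---standard from~\cite{Givental15}, but not established within the paper. The paper's argument is more self-contained and concrete, at the cost of recycling the localization analysis and carrying out explicit coefficient bookkeeping.
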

\begin{proof}
Suppose $F(t,u,q)$ lies on $\mathcal L_{S_\infty}$ and satisfies properties (1), (2) and (3). Therefore, $F$ has the form
\[
F(t,u,q)=(1-q)\phi_0+\hat{\bf t}(1/q)+\sum_k \phi^k\llangle \frac{\phi_k}{1-qL}\rrangle_1^\infty(\hat {\bf t}),
\]
where 
\[
\hat{\bf t}(q)={\bf t}(q)+f(u,q).
\] 
Following the same localization process as Lemma \ref{lemma:J-function-relation}, we have 
\[
\frac {1}{d}\llangle \partial_{u^r}\hat{\bf t}(L),\phi_s|\on{ev}^*_1(p_0)\otimes \on{ev}^*_2(p_\infty)\rrangle_2^{G,\infty}=(\partial_{u^r}F(t,u,q),\partial_{t_0^s}F(t,u,1/q))
\]
has no pole at roots of unity in $q$ for all narrow $r,s$.

Now suppose $F$ satisfies properties (1), (2), (3) and the series (\ref{partial-der-series})
has no pole at roots of unity for all narrow $r,s$. We want to show that $F\in \mathcal L_{S_\infty}$. Fix a root of unity $\xi$, assume we know $f_{\xi,\vec m^\prime,\vec n^\prime,j^\prime,s^\prime}$ for all $(|\vec n^\prime|,|\vec m^\prime|)\leq (|\vec n|,|\vec m|)$. The coefficient of $\frac{t^{\vec m}u^{\vec n}(\xi q)^j}{(1-\xi q)^{j+1}}$ in
$(\partial_{u^r}F(t,u,q),\partial_{t_0^s}F(t,u,1/q))$ is
the sum of the leading term $(n^r+1)f_{\xi,\vec m,(\vec n,r),j,s}$ and terms that are determined by induction and $f(u,q)$. This summation is zero since (\ref{partial-der-series}) has no pole at roots of unity in $q$. Therefore, $f_{\xi,\vec m,(\vec n,r),j,s}$ is recursively determined. This implies that $F(t,u,q)$ is recursively determined from $F(t,0,q)$. Therefore $F(t,u,q)$ and the series
\[
(1-q)\phi_0+\hat{\bf t}(1/q)+\sum_k \phi^k \llangle \frac{\phi_k}{1-qL}\rrangle_1^\infty(\hat{\bf t})
\]
agree on the restriction $u=0$ and satisfy the same recursion relation. Hence
\[
F(t,u,q)=(1-q)\phi_0+\hat{\bf t}(1/q)+\sum_k \phi^k \llangle \frac{\phi_k}{1-qL}\rrangle_1^\infty(\hat{\bf t})
\]
lies on $\mathcal L_{S_\infty}$.
\end{proof}


\begin{thebibliography}{30}	

\bibitem{CIR} A. Chido, H. Iritani, Y. Ruan,
\emph{Landau-Ginzburg/Calabi-Yau correspondence, global mirror symmetry and Orlov equivalence.}
Publ. Math. Inst. Hautes \'Etudes Sci. 119 (2014), 127--216.

\bibitem{CFK13} I. Ciocan-Fontanine, B. Kim,
\emph{Higher genus quasi map wall-crossing for semi-positive targets.}
arXiv:1308.6377, to appear in JEMS.

\bibitem{CFK14a} I. Ciocan-Fontanine, B. Kim,
\emph{Wall-crossing in genus zero quasimap theory and mirror map.}
Algebr. Geom. 1 (2014), no. 4, 400--448.

\bibitem{CFK14b} I. Ciocan-Fontanine, B. Kim,
\emph{Big I-functions.}
arXiv:1401.7417, To appear in Proceedings of the conference on the occasion of Mukai's 60th birthday.

\bibitem{FJR13} H. Fan, T. Jarvis, Y. Ruan,
\emph{The Witten equation, mirror symmetry, and quantum singularity theory.}
Ann. of Math. (2) 178 (2013), no. 1, 1--106. 

\bibitem{FJR15} H. Fan, T. Jarvis, Y. Ruan,
\emph{A mathematical theory of the gauged linear sigma model.}
arXiv:1506.02109

\bibitem{Givental96} A. Givental,
\emph{Equivariant Gromov-Witten invariants.}
Internat. Math. Res. Notices 1996, no. 13, 613--663. 

\bibitem{Givental15} A. Givental,
\emph{Permutation-equivariant quantum $K$-theory I-VIII,} available at the author's website 
https://math.berkeley.edu/\~{}giventh/perm/perm.html

\bibitem{GL} A. Givental, Y.-P. Lee,
\emph{Quantum K-theory on flag manifolds, finite difference Toda lattices and quantum groups.}
Invent. Math. 151 (2003), 193--219.

\bibitem{Guere} J. Gu\'er\'e,
\emph{In preparation.}

\bibitem{Hassett} B. Hassett,
\emph{Moduli spaces of weighted pointed stable curves.}
Adv. Math. 173 (2003), no. 2, 316--352. 

\bibitem{Lee} Y.-P. Lee, 
\emph{ Quantum $K$-theory I: Foundations.}
Duke Math. J. 121 (2004), no. 3, 389--424. 

\bibitem{LLY} B. Lian, K. Liu, S.-T. Yau,
\emph{Mirror principle. I.}
Asian J. Math. 1 (1997), no. 4, 729--763. 

\bibitem{Qu} F. Qu,
\emph{Virtual pullbacks in K-theory.}
arXiv:1608.02524

\bibitem{RR} D. Ross, Y. Ruan,
\emph{Wall-crossing in genus zero Landau-Ginzburg theory.} to appear in J. Reine Angew. Math., arXiv:1402.6688.

\bibitem{Tonita} V. Tonita,
\emph{Twisted K-theoretic Gromov-Witten invariants.}
arXiv:1508.05976.

\bibitem{TY} H.-H. Tseng, F. You,
\emph{K-theoretic quasimap invariants and their wall-crossing.}
arXiv:1602.06494

\bibitem{Witten} E. Witten,
\emph{Phases of N=2 theories in two dimensions.}
Nuclear Phys. B 403 (1993), no. 1-2, 159--222. 
\end{thebibliography}
\end{document}